\documentclass[a4paper,11pt]{article}

\usepackage{amssymb,latexsym,amsthm,amsmath,stmaryrd,amsfonts}
\usepackage{mathabx}
\usepackage{graphicx}
\usepackage{color}%
\usepackage{times}

\newcommand{\R}{\mathbb{R}}

\newtheorem{thm}{Theorem}[section]
\newtheorem{lem}[thm]{Lemma}
\newtheorem{prop}[thm]{Proposition}

\theoremstyle{definition}
\newtheorem{defn}[thm]{Definition}

\theoremstyle{remark}


\newcommand{\tos}{\rightrightarrows} 

\usepackage{hyperref}

\title{Time-Dependent Generalized Nash Equilibrium Problem}
\author{John Cotrina
	\thanks{Universidad del Pac\'ifico. Av. Salaverry 2020, Jes\'us Mar\'ia, Lima, Per\'u. Email: 
	\texttt{\{ cotrina\_je,~zuniga\_jj\}@up.edu.pe}} 
	\and Javier Z\'u\~niga\footnotemark[1]}

\begin{document}
\maketitle
\begin{abstract}
We prove an existence result for the time-dependent generalized Nash equilibrium problem under generalized convexity using 
a fixed point theorem.
Furthermore, an application to the dynamic abstract economy is considered.
\end{abstract}
\noindent{\bf Key words:} Generalized Nash equilibrium problem, infinite-dimensional strategy spaces, Generalized convexity.
\bigskip

\noindent{{\bf MSC (2010)}: 91B55,~  91B50} 
\bigskip
\section{Introduction and definitions}
Here, we consider the Lebesgue space $L^2([0,T],\R^n)$ with the inner product
\[
 \langle\!\langle \phi,\psi\rangle\!\rangle=\int_0^T\!\phi(t)\psi(t) dt.
\]
The time-dependent generalized Nash equilibrium problem (time-dependent GNEP) is a 
generalized Nash game in which each player's strategy and objective function depend on time.
More precisely, let us assume that we have $p$ players and to each player $\nu \in \{ 1, 2, ..., p \}$ we can associate a natural number 
$n_\nu$. Set $\displaystyle n = \sum_{\nu=1}^p n_\nu$. Each player has a strategy
$x^\nu \in X_{\nu}(x^{-\nu})\subset L^2([0,T], \R^{n_\nu})$, where by $x^{-\nu} \in L^2([0,T], \R^{n - n_\nu})$ 
we mean to denote the vector formed by all players' strategies except for those of player $\nu$. The set $X_{\nu}(x^{-\nu})$ 
is the strategy space of player $\nu$ given the strategy of the other players as it is usually the case in non-cooperative games.
We can also write $x = (x^\nu, x^{-\nu}) \in \displaystyle\prod_{\nu=1}^p X_{\nu}(x^{\nu})\subset L^2([0,T], \R^n)$ 
 which is shorthand (already used in many papers on the subject, see e.g. \cite{AGM,Facchinei-Kanzow})
to denote $x = (x^1, ..., x^{\nu-1}, x^\nu ,x^{\nu+1},..., x^p)$ as a way to single out the strategy of player $\nu$ within the full strategy vector.
Each $x^\nu(t) \in \R^{n_\nu}$ can be 
thought of as a strategy of player $\nu$ at time $t \in [0,T]$. Then $x \in L^2([0,T], \R^n)$ is the full strategy vector and thus $x(t)$ is the vector of strategies of all players at a given time $t \in [0,T]$.

Let $\theta_\nu : L^2([0,T], \R^n) \to \R$ be the objective function for player $\nu$. 
A strategy $\hat{x} \in K \subset L^2([0,T],\R^n)$ is a \emph{time-dependent generalized Nash equilibrium} (\cite{AGM})
if and only if for each player $\nu$, we have $\hat{x}^\nu \in X_\nu(\hat{x}^{-\nu})$ and 
\[ \theta_\nu (\hat{x}) \geq \theta_\nu (x^\nu, \hat{x}^{-\nu}), \quad \text{for all} \,\, x^{\nu} \in X_\nu(\hat{x}^{-\nu}). \]

In other words this means that $\hat{x}\in K\subset L^2([0,T],\R^{n_\nu})$ is a time-dependent generalized Nash equilibrium if
for all $\nu$, $\hat{x}^\nu\in L^2([0,T],\R^{n_\nu})$ solves the following optimization problem
\begin{equation*}
 \max_{x^\nu\in X_{\nu}(\hat{x}^{-\nu})} \theta_\nu(x^\nu,\hat{x}^{-\nu})
\end{equation*}

GNEPs with infinite-dimensional strategy spaces have been
investigated in recent years (see for instance \cite{AGM,CG,CY}). In \cite{CG,CY} 
the concavity or convexity of all objective functions was considered in order to obtain an existence result. Recently in \cite{AGM}
the authors give an existence result under generalized convexity (more precisely semistrictly quasiconcavity) and in the jointly convex case.
In this sense, we propose an existence result under only quasiconcavity of all
objective functions and classical hypothesis on the constraint set-valued map, which generalizes Theorem 6 in \cite{Facchinei-Kanzow}.

A function $f:L^2([0,T],\R^n)\to\R$ is said to be
\begin{itemize}
 \item \emph{quasiconcave} if for any $x,y\in L^2([0,T],\R^n)$ and $\lambda\in[0,1]$, we have 
\[
 f(\lambda x+(1-\lambda)y)\geq \min\{f(x),f(y)\};
\]
\item \emph{semistrictly quasiconcave} if it is quasiconcave and for any $x,y\in L^2([0,T],\R^n)$, such that $f(x)\neq f(y)$, and $\lambda\in ]0,1[$, we have 
\[
 f(\lambda x+(1-\lambda)y)> \min\{f(x),f(y)\}.
\]
\end{itemize}

A set-valued map $F:L^2([0,T],\R^n)\tos L^2([0,T],\R^n)$ is said to be:
\begin{itemize}
 \item \emph{upper semicontinuous} at the point $x\in L^2([0,T],\R^n)$ if for any open 
 $W$ such that $F(x)\subset W$, there exists a neighborhood $V$ of $x$ such that, for all $z\in V$, we have $F(z)\subset W$.
 \item \emph{lower semicontinuous} at the point $x\in L^2([0,T],\R^n)$ if for any open 
 $W$ such that $F(x)\cap W\neq\emptyset$, there exists a neighborhood $V$ of $x$ such that, for all $z\in V$, we have $F(z)\cap W\neq\emptyset$.
\end{itemize}

Our existence result will be obtained as a consequence of Kakutani's Theorem which is stated in the next result and it can be found in \cite{GD}.
\begin{thm}[Kakutani]\label{Kakutani}
Let $K$ be a nonempty compact convex subset of a locally convex space $E$ and 
let $T: K \tos K$ be a set-valued map. If $T$ is upper semicontinuous such that for all $x\in K$, $T(x)$ is nonempty, closed and convex, 
then $T$ admits a fixed point.
\end{thm}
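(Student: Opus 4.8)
This is the Fan--Glicksberg extension of Kakutani's theorem to a locally convex space, so the plan is to bootstrap from Brouwer's fixed point theorem, reducing the set-valued infinite-dimensional problem first to a single-valued finite-dimensional one. The whole argument rests on one standing observation that I would record at the outset: since $T$ is upper semicontinuous with closed values on the compact set $K$, its graph $\gra T$ is closed in $K \times K$. This is the property that will let me interchange a limit with the membership relation $x \in T(x)$ at the very end.

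First I would treat the finite-dimensional case $E = \R^m$, $K \subset \R^m$ compact and convex, using an approximate continuous selection. Fixing $\epsilon > 0$, upper semicontinuity together with compactness lets me cover $K$ by finitely many balls $B(x_i, \delta_i)$ on which $T$ stays inside $T(x_i)$ enlarged by $\epsilon$; taking a partition of unity $\{ \varphi_i \}$ subordinate to this cover and selecting $y_i \in T(x_i)$, I set $f_\epsilon(x) = \sum_i \varphi_i(x) y_i$. Convexity of $K$ guarantees $f_\epsilon : K \to K$, and $f_\epsilon$ is continuous, so Brouwer's theorem yields $x_\epsilon = f_\epsilon(x_\epsilon)$. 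This $x_\epsilon$ is an approximate fixed point of $T$: it is a convex combination of points $y_i \in T(x_i)$ with each relevant $x_i$ close to $x_\epsilon$.

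Then I would pass to the limit $\epsilon \to 0$. Compactness of $K$ gives a convergent subnet $x_\epsilon \to \bar x$, and the contributing centers $x_i$ converge to $\bar x$ as well. Closedness of $\gra T$ places every limiting selected value in $T(\bar x)$, and since $T(\bar x)$ is closed and convex the limit of the convex combinations, namely $\bar x$ itself, lies in $T(\bar x)$. This settles the finite-dimensional statement, which is the classical Kakutani theorem.

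Finally I would lift the result to a general locally convex $E$ by running the same averaging construction along the neighborhood filter of the origin. For each closed convex neighborhood $V$ of $0$, the cover--partition--average scheme produces a continuous $f_V$ whose range is contained in the convex hull of finitely many selected points, hence in a finite-dimensional compact convex set $K_V \subset K$; applying the finite-dimensional result to $f_V$ restricted to $K_V$ yields $x_V$ with $x_V \in T(x_V) + V$. Directing the neighborhoods downward by inclusion, compactness of $K$ extracts a convergent subnet of $(x_V)$, and closedness of $\gra T$ forces its limit to be a genuine fixed point of $T$. The step I expect to be the main obstacle is this last reduction: one must arrange that the approximating maps $f_V$ actually take values in $K$ — which is exactly where convexity of both $K$ and the values of $T$ is essential — and that the net of approximate fixed points converges to a point satisfying the exact inclusion rather than merely a $V$-relaxation of it; again the closed-graph property is what carries this through.
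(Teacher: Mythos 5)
First, a point of order: the paper does not prove this statement at all --- Theorem \ref{Kakutani} is quoted as a known result (it is really the Fan--Glicksberg generalization of Kakutani's theorem) with a citation to Granas and Dugundji --- so there is no in-paper argument to compare yours against. Your sketch follows the standard textbook route: Brouwer's theorem plus partition-of-unity approximations gives the finite-dimensional case, and a net of such approximations indexed by the neighborhood filter of the origin, together with compactness of $K$ and closedness of $\gra T$, gives the locally convex case. That architecture is correct.

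The one step that does not follow as written is the assertion that the Brouwer fixed point $x_V$ of $f_V$ satisfies $x_V\in T(x_V)+V$. From the construction $f_V(x)=\sum_i\varphi_i(x)y_i$ with $y_i\in T(x_i)$, what you actually obtain is $x_V\in\co\bigl(\bigcup\{T(x_i):\varphi_i(x_V)>0\}\bigr)$, a convex combination of values of $T$ at points \emph{near} $x_V$ --- and upper semicontinuity only yields inclusions of the form $T(x')\subset T(x_i)+V$ for $x'$ near $x_i$, which is the wrong direction for collapsing that union onto $T(x_V)$. Closedness of $\gra T$ by itself cannot digest a convex combination of graph points taken at several different base points. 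Two standard repairs: (a) Cellina's trick --- refine the cover so that among the indices active at any given $x$ there is a dominating index $j$ with $T(x_i)\subset T(x_j)+V$ for all active $i$; convexity of $T(x_j)+V$ then gives $x_V\in T(x_V')+V$ for some $x_V'$ close to $x_V$, and your closed-graph limit goes through verbatim; or (b) bypass selections entirely and apply the finite-dimensional theorem to the multimap $x\mapsto (T(x)+V)\cap\co\{x_1,\dots,x_N\}$, which is nonempty- and convex-valued with closed graph because the sets $x_i+V$ cover $K$; this yields exactly the relation $x_V\in T(x_V)+V$ you want. The same issue appears in your finite-dimensional limit $\epsilon\to 0$: to extract convergent subsequences of the individual selected values you need either repair (a) or Carath\'eodory's theorem to bound the number of terms in the convex combinations. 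With either repair inserted, the proof is complete and agrees with the argument in the cited reference.
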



\section{Existence Result}

As we have already mentioned earlier, our aim is to prove the existence of a time-dependent Nash equilibrium, 
and this will be done thanks to a reformulation of the equilibrium problem into an associated fixed point problem.

With the previous notation of time-dependent GNEP, for any given optimal strategy $x^{-\nu}$ of the rival players, let us define 
\[ S_\nu (x^{-\nu}) = \{ \hat{x}^\nu \in X_\nu(x^{-\nu}): \,\, \theta_\nu (\hat{x}^\nu,x^{-\nu}) \geq \theta_\nu (x^\nu, x^{-\nu})
~\text{for all} \,\, x^{\nu} \in X_\nu(x^{-\nu}) \}, \] 
and also the set-valued map $ S:K\tos K$ defined as
$\displaystyle S(x) = \prod_{\nu=1}^p S_\nu(x^{-\nu})$, where $ K = \displaystyle\prod_{\nu=1}^p K_\nu $.

The following proposition connects the notions of equilibrium and fixed point.

\begin{prop} \label{equiv}
Let $\hat{x}\in K$, then $\hat{x}$ is a time-dependent generalized Nash equilibrium if and only if it is a fixed point of $S$.
\end{prop}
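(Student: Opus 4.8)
The plan is to prove both implications simultaneously by unwinding the definitions, exploiting the fact that both the equilibrium condition and the fixed-point condition decouple across the players $\nu$. First I would observe that, by the very definition of the product map $\ds S(x) = \prod_{\nu=1}^p S_\nu(x^{-\nu})$, the statement $\hat{x} \in S(\hat{x})$ is equivalent to the conjunction, over all $\nu \in \{1,\dots,p\}$, of the membership relations $\hat{x}^\nu \in S_\nu(\hat{x}^{-\nu})$. Thus it suffices to show, for each fixed $\nu$, that $\hat{x}^\nu \in S_\nu(\hat{x}^{-\nu})$ holds if and only if $\hat{x}$ satisfies the equilibrium condition for player $\nu$.

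The core step is then to expand the definition of $S_\nu(\hat{x}^{-\nu})$: by construction, $\hat{x}^\nu \in S_\nu(\hat{x}^{-\nu})$ means precisely that $\hat{x}^\nu \in X_\nu(\hat{x}^{-\nu})$ together with the inequality $\theta_\nu(\hat{x}^\nu, \hat{x}^{-\nu}) \geq \theta_\nu(x^\nu, \hat{x}^{-\nu})$ for all $x^\nu \in X_\nu(\hat{x}^{-\nu})$. The key bookkeeping observation is that the notation $(\hat{x}^\nu, \hat{x}^{-\nu})$ denotes the full vector $\hat{x}$ itself, so $\theta_\nu(\hat{x}^\nu, \hat{x}^{-\nu}) = \theta_\nu(\hat{x})$. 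Substituting this identity turns the condition into exactly $\hat{x}^\nu \in X_\nu(\hat{x}^{-\nu})$ and $\theta_\nu(\hat{x}) \geq \theta_\nu(x^\nu, \hat{x}^{-\nu})$ for all $x^\nu \in X_\nu(\hat{x}^{-\nu})$, which is verbatim the defining property of a time-dependent generalized Nash equilibrium for player $\nu$.

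Combining the per-player equivalences across all $\nu$ yields the desired biconditional for $\hat{x}$, completing the argument. I do not expect any genuine obstacle here, as the proposition is a definitional equivalence rather than a result requiring analytic input; the only point demanding care is the consistent handling of the splitting notation $x = (x^\nu, x^{-\nu})$ and the resulting identification $\theta_\nu(\hat{x}^\nu, \hat{x}^{-\nu}) = \theta_\nu(\hat{x})$, together with the correct translation between membership in the product $\prod_{\nu=1}^p S_\nu(\hat{x}^{-\nu})$ and the componentwise conditions.
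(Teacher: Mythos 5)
Your proof is correct and follows essentially the same route as the paper's: both arguments simply unwind the definitions player by player, identify $\theta_\nu(\hat{x}^\nu,\hat{x}^{-\nu})$ with $\theta_\nu(\hat{x})$, and translate the componentwise conditions $\hat{x}^\nu\in S_\nu(\hat{x}^{-\nu})$ into membership in the product $S(\hat{x})$. Your version is just a more explicit writeup of the paper's one-line definitional equivalence.
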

\begin{proof}
The vector $\hat{x}$ is a time-dependent generalized Nash equilibrium if and only if for each $\nu$ it satisfies 
$ \theta_\nu(\hat{x}^\nu,\hat{x}^{-\nu})\geq  \theta_\nu(x^\nu,\hat{x}^{-\nu})$, for all $x^\nu\in X_\nu(\hat{x}^{-\nu})$,
i.e. $\hat{x}^\nu\in S_\nu(\hat{x}^{-\nu})$, which is equivalent to $\hat{x}\in S(\hat{x})$.
\end{proof}

We are ready for our main result.
\begin{thm}\label{principal}
With the previous notation, let us assume that:
\begin{enumerate}
\item There exist $p$ nonempty, convex and weakly compact sets $K_\nu\subset L^2([0,T],\R^{n_\nu})$ such that
for every $x\in L^2([0,T],\R^{n})$ with $x^{\nu}\in K_\nu$ for every $\nu$, $X_{\nu}(x^{-\nu})$ is nonempty,
closed and convex, $X_\nu(x^{-\nu})\subset K_\nu$, and $X_\nu,$ as a set-valued map, is both upper and lower semicontinuous;
\item the function $\theta_\nu$ is continuous for every player $\nu$;
\item the function $\theta_\nu(\cdot,x^{-\nu})$ is quasiconcave on $X_\nu(x^{-\nu})$ for every player $\nu$.
\end{enumerate}
Then there exists a time-dependent generalized Nash equilibrium.
\end{thm}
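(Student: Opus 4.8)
The plan is to apply Kakutani's theorem (Theorem~\ref{Kakutani}) to the best-response map $S$ and then use Proposition~\ref{equiv} to turn the resulting fixed point back into a time-dependent generalized Nash equilibrium. Since the sets $K_\nu$ are only \emph{weakly} compact, I would equip $E=L^2([0,T],\R^n)$ with its weak topology, under which it is a Hausdorff locally convex space and $K=\prod_{\nu=1}^p K_\nu$ is nonempty, convex and compact, being a finite product of weakly compact convex sets. Because $L^2([0,T],\R^n)$ is separable, the weak topology restricted to the bounded set $K$ is metrizable, so throughout I may argue with sequences rather than nets.

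Next I would verify the pointwise hypotheses of Kakutani for $S(x)=\prod_\nu S_\nu(x^{-\nu})$ at a fixed $x\in K$. The constraint set $X_\nu(x^{-\nu})$ is convex and norm-closed, hence weakly closed by Mazur's theorem, and as a weakly closed subset of the weakly compact set $K_\nu$ it is weakly compact. The crucial observation is that quasiconcavity of $\theta_\nu(\cdot,x^{-\nu})$ together with its continuity makes it \emph{weakly upper semicontinuous}: each superlevel set $\{w:\theta_\nu(w,x^{-\nu})\ge\alpha\}$ is convex and norm-closed, therefore weakly closed. Weierstrass' theorem for a weakly upper semicontinuous function on a weakly compact set then gives that the maximum is attained, so $S_\nu(x^{-\nu})\neq\emptyset$. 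Moreover $S_\nu(x^{-\nu})$ is precisely the superlevel set of $\theta_\nu(\cdot,x^{-\nu})$ at its maximal value intersected with $X_\nu(x^{-\nu})$; as an intersection of two weakly closed convex sets it is weakly closed, hence weakly compact, and convex. Passing to products, $S(x)$ is nonempty, convex and weakly compact, and $S(x)\subset K$ since $X_\nu(x^{-\nu})\subset K_\nu$.

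The heart of the proof, and the step I expect to be the main obstacle, is the upper semicontinuity of $S$. As $S$ takes values in the weakly compact space $K$ with weakly closed values, it suffices to show that $S$ has a weakly closed graph, and by metrizability this reduces to a sequential statement. I would take $x_k\rightharpoonup x$ and $y_k\in S(x_k)$ with $y_k\rightharpoonup y$, and argue componentwise that $y^\nu\in S_\nu(x^{-\nu})$. Feasibility of the limit, $y^\nu\in X_\nu(x^{-\nu})$, follows from upper semicontinuity of $X_\nu$ together with its weakly closed values, which yields a closed graph in the regular space $K_\nu$. For optimality, given any competitor $z^\nu\in X_\nu(x^{-\nu})$, lower semicontinuity of $X_\nu$ produces $z_k^\nu\in X_\nu(x_k^{-\nu})$ with $z_k^\nu\rightharpoonup z^\nu$, and the inequalities $\theta_\nu(y_k^\nu,x_k^{-\nu})\ge\theta_\nu(z_k^\nu,x_k^{-\nu})$ must then be passed to the limit to obtain $\theta_\nu(y^\nu,x^{-\nu})\ge\theta_\nu(z^\nu,x^{-\nu})$. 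This limit passage is the delicate point: the convergences supplied by weak compactness and by the semicontinuity of $X_\nu$ are only weak, whereas $\theta_\nu$ is continuous for the norm topology, so reconciling the two topologies here, by combining the weak upper semicontinuity coming from quasiconcavity with the continuity hypothesis on $\theta_\nu$, is where the argument requires the most care.

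Once upper semicontinuity of $S$ is established, Theorem~\ref{Kakutani} applies to $S$ on $K$ endowed with the weak topology and furnishes a fixed point $\hat{x}\in S(\hat{x})$, which by Proposition~\ref{equiv} is the desired time-dependent generalized Nash equilibrium.
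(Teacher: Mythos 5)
Your overall architecture is the same as the paper's: verify the hypotheses of Theorem~\ref{Kakutani} for the best-response map $S$ on $K$ endowed with the weak topology, and conclude via Proposition~\ref{equiv}. Your pointwise analysis is in fact more careful than the paper's one-line justification: the observation that quasiconcavity plus norm continuity makes $\theta_\nu(\cdot,x^{-\nu})$ weakly upper semicontinuous (convex norm-closed superlevel sets are weakly closed by Mazur) correctly delivers nonemptiness, convexity and weak compactness of $S_\nu(x^{-\nu})$.

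The gap is exactly where you flag it: the limit passage in $\theta_\nu(y_k^\nu,x_k^{-\nu})\ge\theta_\nu(z_k^\nu,x_k^{-\nu})$. You announce that this is ``the delicate point'' but never carry it out, and with the tools you have assembled it cannot be carried out. Letting $k\to\infty$ requires $\limsup_k\theta_\nu(y_k^\nu,x_k^{-\nu})\le\theta_\nu(y^\nu,x^{-\nu})$ and $\liminf_k\theta_\nu(z_k^\nu,x_k^{-\nu})\ge\theta_\nu(z^\nu,x^{-\nu})$ along weakly convergent sequences. The first would need weak upper semicontinuity of $\theta_\nu$ \emph{jointly} in $(x^\nu,x^{-\nu})$, but quasiconcavity is assumed only in $x^\nu$, so the joint superlevel sets need not be convex and the Mazur argument does not apply; the second would need weak \emph{lower} semicontinuity, for which quasiconcavity gives nothing at all (the function $x\mapsto-\|x\|^2$ is concave and norm continuous yet fails to be weakly lower semicontinuous, as any orthonormal sequence shows). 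So the closed-graph argument stalls at its decisive step. The paper's own proof does not confront this: it invokes Berge's maximum theorem (Theorem~\ref{Berge}), which tacitly presupposes that the continuity of $\theta_\nu$ and the semicontinuity of $X_\nu$ are taken with respect to the same metrizable topology in which $K$ is compact, i.e.\ the weak topology; read that way, hypothesis 2 means ``weakly continuous'' and the difficulty you isolated disappears by fiat. To complete your proof you must either strengthen the continuity hypothesis in the same way or supply an argument reconciling the two topologies; identifying the obstacle is not the same as overcoming it.
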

In order to proof the above theorem we need Berge's maximum theorem which can be found in \cite{EO}.
\begin{thm}[Berge's maximum theorem]\label{Berge}
Let X,Y be two metric spaces, $f: X \times Y \to \R$ be a function and $F: X \to 2^Y$ a set-valued map. Assume that $f$ is continuous, $F$ is both upper and lower semicontinuous; and $F$ is nonempty and compact valued. Then it follows that 
\begin{enumerate}
\item $\displaystyle \varphi : x \to \max_{y \in F(x)} f(x,y)$ is a continuous function from $X$ to $\R$.
\item $\displaystyle \Phi: x \to \arg \max_{y \in F(x)} f(x,y)$ is an upper semicontinuous set-valued map from $X$ to $2^Y$ and compact-valued.
\end{enumerate}
\end{thm}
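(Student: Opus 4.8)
The plan is to run the classical sequential argument, splitting the continuity of $\varphi$ into its lower and upper semicontinuous halves and deriving these two from the lower and upper semicontinuity of $F$ respectively, and then deducing the properties of $\Phi$ from those of $\varphi$. First I would settle well-definedness and the structural facts. For fixed $x$ the set $F(x)$ is nonempty and compact and $f(x,\cdot)$ is continuous, so the maximum defining $\varphi(x)$ is attained; hence $\varphi(x)\in\R$ is well defined and $\Phi(x)\neq\emptyset$. Writing $\Phi(x)=\{\,y\in F(x):f(x,y)=\varphi(x)\,\}$, this set is the intersection of $F(x)$ with the closed set $\{y:f(x,y)=\varphi(x)\}$, hence a closed subset of the compact set $F(x)$, and therefore compact. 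This disposes of the compact-valuedness claim in item (2).

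The single lemma on which both upper semicontinuity statements rest is the following consequence of upper semicontinuity together with compact values, valid between metric spaces: if $x_n\to x$ and $y_n\in F(x_n)$, then $d(y_n,F(x))\to 0$ and consequently $(y_n)$ has a subsequence converging to a point of $F(x)$. Indeed, applying upper semicontinuity of $F$ to the open $\varepsilon$-neighbourhoods of the compact set $F(x)$ forces $d(y_n,F(x))\to 0$; choosing nearest points $z_n\in F(x)$ and using compactness of $F(x)$ to pass to a convergent subsequence $z_{n_k}\to z\in F(x)$ then yields $y_{n_k}\to z$. I expect this extraction step to be the main obstacle, since it is the only place where compactness is genuinely used; everything afterwards is continuity bookkeeping.

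With this lemma in hand I would prove continuity of $\varphi$ in two steps. For lower semicontinuity, fix $x_n\to x$ and choose $\bar y\in\Phi(x)$, so $f(x,\bar y)=\varphi(x)$; lower semicontinuity of $F$ provides, in the sequential form valid in metric spaces, selections $y_n\in F(x_n)$ with $y_n\to\bar y$, whence $\varphi(x_n)\ge f(x_n,y_n)\to f(x,\bar y)=\varphi(x)$ and thus $\liminf_n\varphi(x_n)\ge\varphi(x)$. For upper semicontinuity, again fix $x_n\to x$, pick maximizers $y_n\in\Phi(x_n)$ with $f(x_n,y_n)=\varphi(x_n)$, pass to a subsequence along which $\varphi(x_n)$ attains $\limsup_n\varphi(x_n)$, and apply the lemma to extract a further subsequence $y_{n_k}\to\bar y\in F(x)$; continuity of $f$ then gives $\limsup_n\varphi(x_n)=\lim_k f(x_{n_k},y_{n_k})=f(x,\bar y)\le\varphi(x)$. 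Combining the two inequalities proves item (1).

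Finally, for the upper semicontinuity of $\Phi$ I would verify the sequential criterion that characterises upper semicontinuity for compact-valued maps between metric spaces: given $x_n\to x$ and $y_n\in\Phi(x_n)\subset F(x_n)$, the lemma yields a subsequence $y_{n_k}\to\bar y\in F(x)$, and then passing to the limit in $f(x_{n_k},y_{n_k})=\varphi(x_{n_k})$, using continuity of $f$ together with the continuity of $\varphi$ just established, gives $f(x,\bar y)=\varphi(x)$, that is $\bar y\in\Phi(x)$. Since $\Phi$ is also compact-valued by the first paragraph, this subsequence property is precisely upper semicontinuity, which completes item (2).
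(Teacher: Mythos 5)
Your proof is correct: the extraction lemma (upper semicontinuity plus compact values forces $d(y_n,F(x))\to 0$ and a convergent subsequence), the split of the continuity of $\varphi$ into a lower half via the sequential form of lower semicontinuity of $F$ and an upper half via the lemma, and the subsequence criterion for upper semicontinuity of the compact-valued map $\Phi$ are all sound in the metric-space setting. Note that the paper itself gives no proof of this statement---it is quoted as a known result with a citation to \cite{EO}---and your argument is essentially the classical sequential proof found in such textbook treatments, so there is no divergence from the paper to report.
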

\begin{proof}[Proof of Theorem \ref{principal}]
The continuity of $\theta_\nu$ and the upper and lower semicontinuity of $X_\nu$ implies, by Theorem \ref{Berge}, 
the upper semicontinuity of the set-valued map $S_\nu$, for all $\nu$.
Moreover,  $S_\nu$ has closed convex values due to conditions 2 and 3.
Therefore, the set-valued map $S$ is upper semicontinuous with closed convex values. 
Since $K$ is weakly compact, applying Theorem \ref{Kakutani}, $S$ has at least one fixed point. By Proposition~\ref{equiv} the result follows.
\end{proof}


\section{Application: Time-dependent Abstract Economy}
Arrow and Debreu in 1954 (see \cite{A-D}) considered a general ``economic system'', named \emph{abstract economy}, 
along with a corresponding definition of equilibrium.
After this pioneering work, several authors established the existence of an equilibrium that included production 
and consumption, see for instance \cite{B-D-M,D-M-V}. 

Recently Donato \emph{et al.} introduced in \cite{Donato-Milasi-Vitanza}
the concept of time-dependent abstract economy, for which they give an existence result by a variational reformulation.
Motivated by this last work, we give an existence solution for a time-dependent abstract economy problem using the 
time-dependent GNEP formulation. More precisely,
we suppose there are $l$ distinct commodities (including all kinds of services).
Each commodity can be bought or sold at a finite number of locations (in space and time).
The commodities are produced in ``production units'' (companies), whose number is $s$. For each production unit $j$
there is a set $A_j$ of possible production plans. An element $a^j\in A_j$ is a vector in $L^2([0,T],\R^l)$.
We note that the sign of the $h$th component at time $t$ of this last vector       has a particular meaning. When the quantity $a_h^j(t)$ is positive, it 
represents the commodity offered in the market by the production unit $j$ at time $t$, this is known as an output. When $a^j_h(t)$ is negative, it represents the amount of this commodity that will be used in the production process (like raw materials), this is known as an input. When $a^j_h(t)$ equals to zero the production unit $j$ does not produce the commodity $h$ at time $t$ nor it is required in the production process (it is not an input nor and output).
If we denote by $p\in L^2([0,T],\R^l)$ the prices of the commodities, the production units will naturally aim at maximizing the total 
revenue.

We also assume the existence of ``consumption units'', typically families or individuals, whose number is $r$. Associated to each 
consumption unit $i$ we have a vector $b^i\in L^2([0,T],\R^l)$, where $b_h^i(t)$ represents the quantity of the $h$th commodity
consumed by the $i$th individual at time $t$. When $b_h^i(t)$ is positive, it represents the amount of commodity $h$ being consumed in the market by the consumption unit $i$ at time $t$. When it is negative, it represents a labor service being offered by the consumption unit $i$ at time $t$. When this quantity is zero the consumption unit $i$ does not consume the commodity $h$ at time $t$ nor it offers it as a labor service. In general, $b^i$ must belong to a certain set $B_i\subset L^2([0,T],\R^l)$ which is convex, closed and bounded from below, i.e., there is a vector $\beta^i$ such that $\beta^i_h(t) \le b^i_h(t)$ a.e. in $[0,T]$, for all $h$. The set $B_i$ includes all consumption
vectors among which the individual could choose one if there were no budgetary constraints (the latter constraints will be explicitly formulated below).
We also assume that the $i$th consumption unit is endowed with a vector $\xi^i\in L^2([0,T],\R^l_+)$ of initial holding of commodities and has a contractual
claim to the share $\alpha_{ij}$ of the profit of the $j$th production unit such that $\alpha_{ij}\geq0$ and
$\displaystyle\sum_{i=1}^r\alpha_{ij}=1$ for all $j$. Under these conditions it is then clear that, given 
a vector of prices $p$, the choice of the $i$th unit is further restricted to those vectors $b^i\in B_i$ such that
\[
\langle\! \langle p,b^i\rangle\!\rangle\leq \langle\!\langle p,\xi^i\rangle\!\rangle+\sum_{j=1}^s\alpha_{ij}\langle\!\langle p,a^j\rangle\!\rangle
\]
As it is standard in economic theory, the consumption units aim is to maximize a utility function $u_i(b^i)$.

\begin{defn}
A \emph{time-dependent economic equilibrium} is a vector of the form
\[
(\hat{a}^1,\cdots,\hat{a}^s,\hat{b}^1,\cdots,\hat{b}^r,\hat{p})
\]
such that
\begin{eqnarray}\label{PP}
\langle\!\langle \hat{p},\hat{a}^j\rangle\!\rangle &=& \max_{a^j\in A_j} \langle\!\langle \hat{p},a^j\rangle\!\rangle,\mbox{ for all }j\\\label{CP}
u_i(\hat{b}^i) &=& \max_{b^i\in D_i(\hat{a},\hat{p})} u_i(b^i),\mbox{ for all }i\\\label{MP}
\sum_{i=1}^r\langle\!\langle \hat{p}, \hat{b}^i-\xi^i\rangle\!\rangle-
\sum_{j=1}^s\langle\!\langle \hat{p}, \hat{a}^j\rangle\!\rangle &=&
\max_{p\in P}\sum_{i=1}^r\langle\!\langle p, \hat{b}^i-\xi^i\rangle\!\rangle-
\sum_{j=1}^s\langle\!\langle p, \hat{a}^j\rangle\!\rangle
\end{eqnarray}
where $
D_i(a,p)=\displaystyle\left\lbrace b^i\in B_i:~ \langle\!\langle p,b^i\rangle\!\rangle\leq\langle\!\langle p,\xi^i\rangle\!\rangle+
\max \left\lbrace 0,\sum_{j=1}^s\alpha_{ij}\langle\!\langle p,a^j\rangle\!\rangle\right\rbrace \right\rbrace$ 
and $P=\displaystyle\left\lbrace p\in L([0,T],\R^l):~p\geq0,~ \frac{1}{T} \int_T \sum_{h=1}^l p^h(t)dt=1\right\rbrace$.

\end{defn}

The following lemma says that the consumption units' demand must be always satisfied in average by the production units. Moreover,
the commodities whose price is zero  a.e. in $[0,T]$, are only possible if the supply exceeds the demand a.e. in $[0,T]$.

\begin{lem}
 Let $A_j$ be a set such that $0\in A_j$, for all $j$. If $(\hat{a},\hat{b},\hat{p})$ is a time-dependent economic equilibrium then 
 \begin{equation}\label{int-0}
 \int_0^T\left[\sum_{i=1}^r (\hat{b}_h^i(t)-\xi_h^i(t))-\sum_{j=1}^s\hat{a}_h^j(t)\right]dt\leq0,~\mbox{for all }h
\end{equation}
Moreover, if there exists a consumption unit $i_0$ whose utility function $u_{i_0}$ has no maximum in $B_{i_0}$ (non-satiation), i.e., for all $b^{i_0} \in B_{i_0}$ there exists a $\overline{b}^{i_0} \in B_{i_0}$ such that $u_{i_0}(\overline{b}^{i_0})>u_{i_0}(b^{i_0})$,
then
\begin{equation}\label{orto}
\sum_{i=1}^r\langle\!\langle \hat{p}, \hat{b}^i-\xi^i\rangle\!\rangle-\sum_{j=1}^s\langle\!\langle \hat{p},a^j\rangle\!\rangle =0.
\end{equation}
\end{lem}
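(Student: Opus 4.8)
The plan is to phrase everything through the aggregate excess-demand function $z\in L^2([0,T],\R^l)$ given by $z(t)=\sum_{i=1}^r(\hat b^i(t)-\xi^i(t))-\sum_{j=1}^s\hat a^j(t)$, so that \eqref{int-0} is the assertion $\int_0^T z^h(t)\,dt\le0$ and \eqref{orto} is $\langle\!\langle\hat p,z\rangle\!\rangle=0$. The first move is to record that profits are nonnegative: since $0\in A_j$, taking $a^j=0$ in \eqref{PP} gives $\langle\!\langle\hat p,\hat a^j\rangle\!\rangle\ge0$ for every $j$, whence (as $\alpha_{ij}\ge0$) the truncation in the definition of $D_i$ is inactive, $\max\{0,\sum_j\alpha_{ij}\langle\!\langle\hat p,\hat a^j\rangle\!\rangle\}=\sum_j\alpha_{ij}\langle\!\langle\hat p,\hat a^j\rangle\!\rangle$. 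As the maximizer in \eqref{CP} is feasible, $\hat b^i\in D_i(\hat a,\hat p)$, so each consumer obeys its budget inequality; summing over $i$ and using $\sum_i\alpha_{ij}=1$ to collapse the profit terms yields the aggregate relation $\langle\!\langle\hat p,z\rangle\!\rangle\le0$.

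Next I combine this with the price condition \eqref{MP}, which states precisely that $\langle\!\langle\hat p,z\rangle\!\rangle=\max_{p\in P}\langle\!\langle p,z\rangle\!\rangle$; hence $\max_{p\in P}\langle\!\langle p,z\rangle\!\rangle\le0$. To read off \eqref{int-0} I test this maximum against the particular price $p$ that equals $1$ on the single commodity $h$ and $0$ on all others: it lies in $P$ because $\frac1T\int_0^T 1\,dt=1$, and for it $\langle\!\langle p,z\rangle\!\rangle=\int_0^T z^h(t)\,dt$. Therefore $\int_0^T z^h(t)\,dt\le0$ for each $h$, which is \eqref{int-0}. (Testing instead against prices concentrated on an arbitrary measurable set of times would even give $z^h(t)\le0$ a.e., but only the averaged statement is needed here.)

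For \eqref{orto} the inequality $\langle\!\langle\hat p,z\rangle\!\rangle\le0$ has to be upgraded to an equality, and this is where non-satiation enters. The heart of the matter is the claim that a non-satiated consumer exhausts the budget, i.e. its budget inequality holds with equality. Suppose consumer $i_0$ had strict slack, $\langle\!\langle\hat p,\hat b^{i_0}\rangle\!\rangle<\langle\!\langle\hat p,\xi^{i_0}\rangle\!\rangle+\sum_j\alpha_{i_0 j}\langle\!\langle\hat p,\hat a^j\rangle\!\rangle$. By non-satiation choose $\bar b\in B_{i_0}$ with $u_{i_0}(\bar b)>u_{i_0}(\hat b^{i_0})$ and move along $b_\lambda=\lambda\bar b+(1-\lambda)\hat b^{i_0}$: convexity of $B_{i_0}$ keeps $b_\lambda\in B_{i_0}$, while linearity of $\langle\!\langle\hat p,\cdot\rangle\!\rangle$ together with the strict slack keeps $b_\lambda$ inside the budget for small $\lambda>0$, so $b_\lambda\in D_{i_0}(\hat a,\hat p)$; since $u_{i_0}(\bar b)\ne u_{i_0}(\hat b^{i_0})$, semistrict quasiconcavity (or continuity together with quasiconcavity) forces $u_{i_0}(b_\lambda)>u_{i_0}(\hat b^{i_0})$, contradicting the maximality in \eqref{CP}. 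Hence the budget binds; feeding the binding budget relations back into the summation of the first paragraph turns the aggregate inequality into the equality $\langle\!\langle\hat p,z\rangle\!\rangle=0$, which is \eqref{orto}.

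The main obstacle is precisely this budget-exhaustion step: manufacturing a feasible bundle strictly preferred to $\hat b^{i_0}$. It is where the hypotheses must be handled with care—convexity of $B_{i_0}$ is needed to stay feasible along the segment, strictness of the slack is needed to remain within the budget for small $\lambda$, and enough regularity of $u_{i_0}$ (semistrict quasiconcavity being the clean sufficient condition) is needed to secure a strict improvement rather than a mere $\ge$. A further delicate point is that non-satiation of a single consumer only forces that consumer's budget to bind; promoting the aggregate inequality all the way to equality really requires running this perturbation argument for every consumer whose budget is not otherwise pinned down. By contrast, the remaining ingredients—nonnegativity of profits, the cancellation via $\sum_i\alpha_{ij}=1$, and the evaluation of \eqref{MP} against explicit test prices—are routine linear bookkeeping.
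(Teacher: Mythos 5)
Your derivation of \eqref{int-0} is correct and is essentially the paper's own argument: nonnegativity of profits from $0\in A_j$ together with \eqref{PP}, summation of the individual budget constraints using $\sum_{i}\alpha_{ij}=1$ to obtain $\langle\!\langle\hat p,z\rangle\!\rangle\le 0$ for the aggregate excess demand $z=\sum_{i}(\hat b^i-\xi^i)-\sum_{j}\hat a^j$, and evaluation of \eqref{MP} at the price concentrated on a single commodity. The gap is in the second half. Your route to \eqref{orto} is to show that each consumer's budget constraint binds and then to re-sum; but non-satiation is hypothesized only for the single consumer $i_0$, so your perturbation argument establishes only that $i_0$'s budget binds. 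The quantity $\langle\!\langle\hat p,z\rangle\!\rangle$ is a sum of $r$ nonpositive individual slacks, and annihilating one of them does not annihilate the sum: the remaining consumers may well be satiated at points strictly interior to their budget sets, in which case $\langle\!\langle\hat p,z\rangle\!\rangle<0$ is not excluded by anything you have proved. You flag this difficulty yourself in your closing paragraph, but flagging it is not closing it; as written, the proof of \eqref{orto} is incomplete.

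For comparison, the paper argues at the aggregate level: it supposes strict inequality in \eqref{Dsat} and claims that then $\lambda b^{i_0}+(1-\lambda)\hat b^{i_0}\in D_{i_0}(\hat a,\hat p)$ for $\lambda$ small, contradicting \eqref{CP}. This avoids having to bind every individual budget, but it leans on exactly the delicate point you identified: strictness of the aggregate inequality only guarantees that \emph{some} consumer's budget has slack, whereas membership in $D_{i_0}(\hat a,\hat p)$ is governed by $i_0$'s individual budget constraint. So your diagnosis of where the difficulty sits is accurate, and your budget-exhaustion step for $i_0$ itself is rigorous; what is missing---in your write-up, and arguably in the lemma as stated---is either non-satiation of every consumer (so that every individual budget binds, as in Arrow--Debreu) or an additional argument transferring the aggregate slack to consumer $i_0$ specifically.
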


\begin{proof}
This result follows from some arguments that appear in Theorem 3 of \cite{D-M-V} and Section 1.4.2 of \cite{A-D}.
Since $0\in A_j$ and (\ref{PP}) is satisfied,  $\langle\!\langle \hat{p},\hat{a}^j\rangle\!\rangle\geq0$ and thus, \[ \sum_{j=1}^s\langle\!\langle \hat{p}, \alpha_{ij}\hat{a}^j\rangle\!\rangle\geq0. \]
On the other hand, as $\hat{b}\in D(\hat{a},\hat{p})$, we deduce that
\begin{equation}\label{Dsat}
\displaystyle \sum_{i=1}^r\langle\!\langle \hat{p}, \hat{b}^i-\xi^i\rangle\!\rangle-
 \sum_{j=1}^s\langle\!\langle \hat{p},\hat{a}^j\rangle\!\rangle\leq 0 
\end{equation}
Now consider $p$ defined as
\[
 p^h=\left\lbrace\begin{array}{cc}
                  0,~h\neq h_0\\
                  1,~h=h_0
                 \end{array}\right.
\]
clearly $p\in P$. Therefore, by (\ref{MP}) we have
\[
 \displaystyle \left\langle\!\!\left\langle p,\sum_{i=1}^r (\hat{b}^i-\xi^i)-\sum_{j=1}^s\hat{a}^j\right\rangle\!\!\right\rangle=
 \int_0^T\left[\sum_{i=1}^r (\hat{b}_{h_0}^{i}(t)-\xi_{h_0}^{i}(t))-\sum_{j=1}^s\hat{a}_{h_0}^{j}(t)\right]dt\leq0.
\]

Moreover, if there exists a consumption unit $i_0$ whose utility function $u_{i_0}$ has no maximum in $B_{i_0}$, then for $\hat{b}^{i_0}$ there exists $b^{i_0}$ such that $u_{i_0}(b^{i_0})>u_{i_0}(\hat{b}^{i_0})$. This implies in turn that
$u_{i_0}(\lambda b^{i_0}+(1-\lambda)\hat{b}^{i_0})>u_{i_0}(\hat{b}^{i_0})$, for all $\lambda\in]0,1[$.
Suppose the strict inequality held in \eqref{Dsat}, i.e.
\[
\sum_{i=1}^r\langle\!\langle \hat{p}, \hat{b}^i-\xi^i\rangle\!\rangle-\sum_{j=1}^s\langle\!\langle \hat{p},a^j\rangle\!\rangle <0. 
\]
If this is the case we could choose a $\lambda$ small enough for which the inequality is still satisfied and 
$\lambda b^{i_0}+(1-\lambda)\hat{b}^{i_0}\in D_{i_0}(\hat{a},\hat{p})$ which is a contradiction. 
\end{proof}

Finally, we can prove the existence of a time-dependent economic equilibrium using Theorem~\ref{principal}.

\begin{thm}
If the following hold:
\begin{enumerate}
 \item[$(i)$] The set $A_j$ is convex and weakly compact and $0\in A_j$, for all $j$;
\item[$(ii)$] the function  $u_i$ is semistrictly quasiconcave and continuous, for all $i$.
\end{enumerate}
Then, there exists a time-dependent economic equilibrium.
\end{thm}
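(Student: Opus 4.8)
The plan is to recast the search for a time-dependent economic equilibrium as a time-dependent GNEP with $s+r+1$ players and then invoke Theorem~\ref{principal}. The players are the $s$ production units, the $r$ consumption units, and one fictitious ``price player''. Production unit $j$ chooses $a^j\in A_j$ and maximizes the revenue $\langle\!\langle p,a^j\rangle\!\rangle$, with the constant constraint map $A_j$; consumption unit $i$ chooses $b^i\in B_i$, maximizes $u_i(b^i)$, and is constrained by the budget correspondence $D_i(a,p)$; the price player chooses $p\in P$ and maximizes $\sum_{i}\langle\!\langle p,b^i-\xi^i\rangle\!\rangle-\sum_j\langle\!\langle p,a^j\rangle\!\rangle$, with constant constraint map $P$. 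By construction a generalized Nash equilibrium of this game is exactly a vector $(\hat a,\hat b,\hat p)$ satisfying \eqref{PP}, \eqref{CP} and \eqref{MP}, so it suffices to verify the three hypotheses of Theorem~\ref{principal} for this game.

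First I would dispose of the conditions that are immediate. For the production and price players the constraint maps $A_j$ and $P$ do not depend on the rivals' strategies, so they are trivially both upper and lower semicontinuous; $A_j$ is nonempty, convex and weakly compact by $(i)$, and $P$ is convex and closed. The objective of each production player is linear in its own variable $a^j$ and that of the price player is linear in $p$, so both are quasiconcave; by $(ii)$ each $u_i$ is continuous and, being semistrictly quasiconcave, is in particular quasiconcave. Hence conditions $2$ and $3$ of Theorem~\ref{principal} come down to the continuity of the pairings appearing in the objectives, and condition $1$ reduces to the analysis of the demand correspondences $D_i$.

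The substantial work is to show that $D_i(a,p)$ is nonempty, closed and convex valued and, crucially, both upper and lower semicontinuous. Convexity is clear, since $D_i$ is a half-space budget constraint intersected with the convex set $B_i$; nonemptiness follows from $\xi^i\ge 0$ and the $\max\{0,\cdot\}$ term, which keep the right-hand wealth nonnegative, together with the lower bound $\beta^i$ on $B_i$; upper semicontinuity is obtained from a closed-graph argument. The delicate point — and the one I expect to be the main obstacle — is the lower semicontinuity of $D_i$. This is the classical difficulty in Arrow--Debreu type models: it requires a cheaper-point (Slater) argument producing a feasible consumption that lies strictly inside the budget, after which any target point is approximated along the segment joining it to that cheaper point. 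One then feeds the continuous $u_i$ and the semicontinuous, compact-valued $D_i$ into Berge's Theorem~\ref{Berge} to conclude the upper semicontinuity and closed convex values of the solution map $S_i$.

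A second, genuinely topological obstacle must also be faced: Theorem~\ref{principal} demands weakly compact strategy sets, so the continuity in its hypotheses (and in Theorem~\ref{Berge}) has to be read for the weak topology, whereas the bilinear pairing $\langle\!\langle p,\cdot\rangle\!\rangle$ is only separately, not jointly, weakly continuous, and the normalized set $P$ is not weakly compact in $L^2$. I would therefore confine every player to a fixed weakly compact convex envelope $K_\nu$ — bounding the demand sets through the budget inequality and the lower bound $\beta^i$, and replacing $P$ by a suitably truncated price simplex — and argue that on these envelopes the relevant pairings behave weakly continuously (or pass to the limit after removing the truncation). Once conditions $1$--$3$ hold on these envelopes, Theorem~\ref{principal} yields a time-dependent generalized Nash equilibrium, which by the reduction of the first paragraph is the desired time-dependent economic equilibrium.
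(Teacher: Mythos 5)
Your overall reduction --- $s+r+1$ players, a fictitious price player maximizing $\sum_i\langle\!\langle p,b^i-\xi^i\rangle\!\rangle-\sum_j\langle\!\langle p,a^j\rangle\!\rangle$ over $P$, and an appeal to Theorem~\ref{principal} --- is exactly the paper's strategy, and your treatment of the production and price players is fine. But the two points you yourself flag as ``the main obstacle'' and ``a second, genuinely topological obstacle'' are left unresolved, and the paper handles both with a single device absent from your plan: it does \emph{not} take $D_i(a,p)$ as the consumption players' constraint map, but $D_i(a,p)\cap S$, where $S=\prod_h S_h$ and $S_h$ is the fixed convex set of nonnegative $f\in L^2([0,T],\R)$ with $\int_0^T f(t)\,dt\le\int_0^T\sum_i\xi^i_h(t)\,dt+R$, the constant $R$ dominating $\bigl|\int_0^T\sum_j a^j_h(t)\,dt\bigr|$ for all $h$. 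This truncation is the ``compact convex envelope'' you were looking for. The price to pay is that the GNEP solution $(\hat a,\hat b,\hat p)$ delivered by Theorem~\ref{principal} only maximizes $u_i$ over $D_i(\hat a,\hat p)\cap S$, and one must still show it maximizes over all of $D_i(\hat a,\hat p)$. That recovery step is the real content of the paper's proof and is missing from yours: inequality \eqref{int-0} of the preceding lemma shows that $\int_0^T\hat b^i_h(t)\,dt$ lies \emph{strictly} below the bound defining $S_h$, so if some $\overline b^i\in D_i(\hat a,\hat p)$ satisfied $u_i(\overline b^i)>u_i(\hat b^i)$, then for $\lambda>0$ small enough the point $\lambda\overline b^i+(1-\lambda)\hat b^i$ would still lie in $D_i(\hat a,\hat p)\cap S$ while, by \emph{semistrict} quasiconcavity, having utility strictly greater than $u_i(\hat b^i)$ --- contradicting optimality in the truncated game.

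This also explains a hypothesis your proposal effectively discards: you use $(ii)$ only to conclude that $u_i$ is quasiconcave, but plain quasiconcavity would not yield $u_i(\lambda\overline b^i+(1-\lambda)\hat b^i)>u_i(\hat b^i)$, and the un-truncation argument would collapse. So the gap is concrete: without the intersection with $S$ and the subsequent segment argument you have neither the compactness needed to invoke Theorem~\ref{principal} nor a route from an equilibrium of the modified game back to a genuine time-dependent economic equilibrium; the cheaper-point and ``remove the truncation in the limit'' arguments you sketch in their place are announced but never carried out, and the paper's proof does not need them in that form.
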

\begin{proof}
First, we consider $S=\prod S_h$  where
\[
 S_h=\left\lbrace f\in L^2([0,T],\R):~\begin{array}{l}
                                      f\geq0\mbox{ a.e. in }[0,T]\mbox{ and }\\
                                     \displaystyle \int_0^T\!f(t)dt\leq\int_0^T\sum_{i=1}^r\xi^i_h(t)dt+R
                                     \end{array}  \right\rbrace
\]
with $R>0$ and $\displaystyle\left| \int_0^T\sum_{j=1}^sa_h^j(t)dt\right|<R$, for all $h$.

We use the following notation:
\begin{itemize}
 \item $x=(a,b,p)=(a^1,\cdots,a^s,b^1,\cdots,b^r,p)$ and 
 \[
x^\nu=\left\lbrace\begin{array}{cl}
                   a^\nu,& \nu\in\{1,\cdots,s\}\\
                   b^{\nu-s},& \nu\in\{s+1,\cdots, s+t\}\\
                   p,&\nu=s+t+1
                  \end{array}
  \right.
 \]
\item The objective functions $\theta_\nu$ are defined as:
\[
 \theta_\nu(x)=\left\lbrace\begin{array}{cl}
                         \langle\langle p,a^\nu\rangle\rangle,& \nu\in\{1,\cdots, s\}\\
                         u_{\nu-s}(b^{\nu-s}),&\nu\in\{s+1,\cdots, s+t\}\\
                         \displaystyle\left\langle\!\!\left\langle p,\sum_{i=1}^t( b^i-\xi^i)-\sum_{j=1}^sa^j\right\rangle\!\!\right\rangle,&\nu=s+t+1
                        \end{array}
\right.
\]
Clearly, each objetive function is semistrictly quasiconcave.
\item The strategy set of player $\nu$ is defined as:
\[
X_\nu(x^{-\nu})=\left\lbrace
\begin{array}{cl}
A_\nu&\nu\in\{1,\cdots,s\}\\
D_{\nu-s}(a,p)\cap S,&\nu\in\{s+1,\cdots,s+t\}\\
P,&\nu=s+t+1
\end{array}
\right.
\]
It is clear that for each $x^{-\nu}$ the strategy set $X_\nu(x^{-\nu})$ is convex and closed.
\end{itemize}

By Theorem \ref{principal} the time-dependent GNEP has a solution of the form $\hat{x}=(\hat{a},\hat{b},\hat{p})$. 

Finally, we will prove that $(\hat{a},\hat{b},\hat{p})$ is a time-dependent equilibrium. First, notice that it holds (\ref{int-0}). Thus,
\[
\int_0^T\!\!\hat{b}_h^i(t)dt\leq  \int_0^T\!\!\sum_{i=1}^r\hat{b}_h^i(t)dt\leq \int_0^T\!\!\sum_{i=1}^r\xi_h^i(t)dt+ 
\int_0^T\!\!\sum_{j=1}^s\hat{a}_h^j(t)dt< \int_0^T\!\!\sum_{i=1}^r\xi_h^i(t)dt+R
\]
If there is a $\overline{b}^i\in D_i(\hat{a},\hat{p})$ such that $u_i(\overline{b}^i)>u_h(\hat{b}^i)$, then the semistrictly
quasiconcavity implies $u_i(\lambda \overline{b}^i+(1-\lambda)\hat{b}^i)>u_i(\hat{b}^i)$, for every 
$\lambda\in]0,1[$. Now,
consider
\[
J=\max_h\left\lbrace \left| \int_0^T\!\! \overline{b}_h^i(t)dt\right| +1\right\rbrace\mbox{ and }
I=\min_h\left\lbrace \int_0^T \!\sum_{i=1}^r\xi_h^i(t)dt+R-\int_0^T\!\! \hat{b}_h^i(t)dt\right\rbrace
\]
For $0<\lambda<\min\{1,I/J\}$, the following holds:
\[
\int_0^t\!\!(\lambda \overline{b}_h^i(t)+(1-\lambda)\hat{b}_h^i(t))dt\leq
\lambda\int_0^T\!\!\overline{b}_h^i(t)dt+\int_0^T\!\!\hat{b}_h^i(t)dt<
\int_0^T\!\!\sum_{i=1}^r\xi_h^i(t)dt+R
\]
that means, $\lambda \overline{b}^i+(1-\lambda)\hat{b}^i\in D_i(\hat{a},\hat{p})\cap S$. However, this is a contradiction.
\end{proof}

\end{document}